\newtheorem{theorem}{Theorem}[section]
\newtheorem{lemma}[theorem]{Lemma}
\newtheorem{corr}[theorem]{Corollary}
\newtheorem{definition}[theorem]{Definition}
\def\R{{\mathbb R}} 
\def\Q{{\mathbb Q}}
\begin{document}

\overfullrule=0pt
\baselineskip=24pt
\font\tfont= cmbx10 scaled \magstep3
\font\sfont= cmbx10 scaled \magstep2
\font\afont= cmcsc10 scaled \magstep2
\title{\tfont A thermodynamic classification of pairs of real numbers\\
via the Triangle Multi-dimensional continued fraction }
\bigskip
\bigskip
\author{Thomas Garrity\\  
Department of Mathematics and Statistics\\ Williams College\\ Williamstown, MA  01267\\ 
email:tgarrity@williams.edu }

\date{}

\maketitle
\begin{abstract}
 A new classification scheme for pairs of real numbers is given, generalizing work from  \cite{Garrity10}, which in turn was motivated by ideas from statistical mechanics in general and work of Knauf \cite{Knauf1} and Fiala and Kleban \cite{Fiala-Kleban1} in particular. Critical for this classification are the number theoretic and geometric properties of the triangle map, a type of multi-dimensional continued fraction.

\end{abstract}

\section{Introduction}

In \cite{Garrity10}, a new classification scheme for real numbers was given, inspired by ideas from statistical mechanics.  The key technical tools involved continued fractions.  In that paper, it was asked if similar classification schemes occur for various multi-dimensional continued fraction algorithms.  The goal of this paper is to show that such a classification scheme does happen for pairs of real numbers using the triangle map \cite{Messaoudi-Nogueira-Schweiger09} \cite {Assaf05} \cite{Garrity01} 
\cite{Schweiger05}, a type of multi-dimensional continued fraction.

Similar to  \cite{Garrity10},  we use the  thermodynamic formalism (developed by Ruelle \cite{Ruelle2} \cite{Ruelle3} , Sinai \cite{Sinai1}  and others in an attempt to put statistical 
mechanics on a firm mathematical foundation)  to define  a partition function for each pair of real numbers.  Using the real-world interpretation of the partition function, we classify different pairs of reals via their critical point phenomena.

The outline of the method is as follows.  We first define the relevant partition function.  Key will be that each pair $(\alpha, \beta)$ of real numbers gives rise to a different partition function, and hence can be thought of as giving rise to a distinct thermodynamic system.  Next we use the critical point phenomena for each of these pairs $(\alpha, \beta)$ to give the needed definitions for our new classification scheme.  In order for this to be a reasonable classification scheme, we next state the theorems that show different pairs $(\alpha, \beta)$ can be distinguished.  In section four, we put our partition functions into the rhetoric of triangle sequences, which is the multidimensional continued fraction that plays the role of continued fractions from \cite{Garrity10}.  The rest of the paper contains the number theoretic proofs of our main theorems.

There are two interrelated justifications for this paper.  First, it shows that analogs of the work in \cite{Garrity10} apply  for  at least one multidimensional continued fraction algorithm.  This suggests that each multidimensional continued fraction has a corresponding statistical mechanical interpretation.  (For more on the general theory of multidimensional continued fractions, see Schweiger's  {\it Multidimensional Continued Fractions} \cite{Schweiger1}; also, many of these different algorithms have been put into a common framework \cite{DasarathaFlapanGarrityLeeMihailaNeumann-Chun-Peluse-Stoffregen1}, using the triangle map as a starting point.)  Second, the proofs of these generalizations will not be obvious.  The proofs of the two main theorems for this paper use key technical aspects of the triangle map, in particular its underlying geometric motivation, which is why the proofs of these generalizations are not  just exercises.

This current paper and its predecessor  \cite{Garrity10} are certainly not the first to apply statistical mechanical ideas to number theoretic questions.  For example,  there is also the transfer operator method, applied primarily to the Gauss map, which allows, in a natural way, 
tools from functional  analysis to be used. This was pioneered by Mayer  (see his \cite{Mayer2} for a survey), and nontrivially extended by Prellberg \cite{Prellberg1}, by Prellberg and Slawny  \cite{Prellberg-Slawny1}, by Isola \cite{Isola1} and recently by Esposti, Isola and Knauf   \cite{Esposti-Isola-Knauf1}. A good introduction to this work is in chapter nine of Hensley \cite{Hensley1}.

Other links of  statistical mechanics to number theory include the work of Knauf \cite{Knauf2} \cite{Knauf3} \cite{Knauf4}, of  Mend\`es France and Tenenbaum \cite{MendesFrance-Tenenbaum1} \cite{MendesFrance-Tenenbaum2},  of Guerra and Knauf \cite{Guerra-Knauf1}, of Contucci and Knauf \cite{Contucci-Knauf1}, of Fiala, Kleban and \"{O}zl\"{u}k \cite{Fiala-Kleban-Ozluk1}, of Kleban and \"{O}zl\"{u}k \cite{Kelban-Ozluk1}, of  Prellberg, Fiala and Kleban \cite{Prellberg-Fiala-Kleban1}, of Feigenbaum, Procaccia and Tel \cite{Feigenbaum-Procaccia-Tel1}, of Kallies, \"{O}zl\"{u}k, Peter and Snyder    \cite{Kallies-Ozluk-Peter-Syder1}  and others.

\section{The partition function}
\subsection{The General Set-up}

Motivated by  statistical mechanics, we need to find an appropriate partition function.  
In section 3.1 in \cite{Garrity10}, a general framework for developing number theoretic partition functions was given.  Here we will simply state what our desired partition function should be.

For  matrices $A=(a_{ij})$ and $B=(b_{ij})$,  the Hilbert-Schmidt 
product (which is also called the Hadamard product) is
$$A*B=Tr(AB^T)=\sum_{1\leq i,j \leq n}a_{ij}b_{ij}.$$
For example, thinking of a $3\times 3$  matrix as an element of $\R^9$, then $A*B=Tr(AB^T)$ is simply the dot product of the two vectors.

For any $(\alpha, \beta)\in \R^2$, define
$$M=\left( \begin{array}{ccc} 0&0&1\\0&0&\alpha \\0&0& \beta \end{array} \right),$$
and define
$$A_0=\left( \begin{array}{rrr} 0&0&1\\1&0&-1 \\0&1& 0 \end{array} \right), A_1=\left( \begin{array}{rrr} 1&0&0\\0&1&0 \\-1&0& 1\end{array} \right).$$
 In this paper, our state space will be, for each positive integer $N$, 
 
 $${\mathcal S}_N=\{(\sigma_1,\ldots, \sigma_N):\sigma_i=0\; \mbox{or} \;1\}.$$
 For each $N$-tuple $I=(\sigma_1,\ldots, \sigma_N)\in {\mathcal S}_N,$ set
 $$A^I =A_{\sigma_1}A_{\sigma_2} \cdots A_{\sigma_N}.$$
 
 \begin{definition} The partition function $Z_N(\alpha,\beta, s)$ is
 $$Z_N(\alpha,\beta, s) = \sum_{I\in {\mathcal S}_N} \frac{1}{|M*A^I|^s}.$$
 \end{definition}
 By direct calculation, we have 
 $$Z_N(\alpha,\beta, s) = \sum_{I\in {\mathcal S}_N} \frac{1}{\left| (\begin{array}{ccc} 1 & \alpha & \beta \end{array}) A^I \left( \begin{array}{c} 0\\0\\1\end{array} \right)  \right|^s}.$$
 If we denote the third column of the matrix $A^I$ as

 $$A^{I}  \left( \begin{array}{c} 0\\0\\1\end{array} \right) = \left( \begin{array}{c} x(I)\\y(I)\\z(I)\end{array} \right), $$
 then
  $$Z_N(\alpha,\beta, s) = \sum_{I\in {\mathcal S}_N} \frac{1}{\left| x(I) + \alpha y(I) + \beta z(I) \right|^s}.$$
  Thus the terms in $Z_N$ that matter most are those $A^I$ whose third column is close to being perpendicular to the vector $ (\begin{array}{ccc} 1 & \alpha & \beta \end{array})$.  The behavior of $Z_N$ is a measure of how good of  a Diophantine approximation we can achieve by iterations of the matrices $A_0$ and $A_1$.  This will be made clear in section \ref{triangle sequences}   in our discussion of the triangle sequence. 
  
  As discussed in \cite{Garrity10},  if we follow the motivation and inspiration from thermodynamics, we expect the partition function to pack a tremendous amount of information. Also, on a much more minor point, the variable $s$ should represent the inverse of temperature (though in this number theoretic system, there is of course no true notion of temperature for numbers.

  The {\it free energy} is the function 
  $$f(s) = \lim_{N\rightarrow \infty} \frac{\log (Z_N(\alpha, \beta, s)}{N}, $$
  when the limit exists.  From physics, the values of $s$ where $f(s)$ is non-analytic should be important.  In real world examples, it is believed that it is at these points where critical phenomena should occur.  We will distinguish different pairs of real numbers based on how their corresponding number-theoretic free energies behave.

\section{Classifying pairs of reals via free energy}
With the notation above,
we have for each pair of   real numbers $(\alpha, \beta)$ and each positive integer $N$ the partition 
function
$Z_N(\alpha,\beta,s).$

\begin{definition} A  pair of real numbers $(\alpha, \beta)$ has a $k$-free energy limit if 
there is a number $s_c$ such that
$$\lim_{N\rightarrow \infty}\frac{\log(Z_N(\alpha,\beta,s))}{s N^k}$$
exists for  all $s > s_c$
\end{definition}
The subscript  `c'  is used to suggest {\it critical point}.
For $k=1$, this is a number-theoretic version of the free energy of the system.
By an abuse of notation we will also say that $(\alpha, \beta)$ has a $f(N)$-free energy limit, for an increasing function $f(N)$ if
$$\lim_{N\rightarrow \infty}\frac{\log(Z_N(\alpha,\beta,s))}{s f(N)}$$
exists for all all $s > s_c$.

This will only be a reasonable classification scheme if we can show the following two  theorems.


\begin{theorem}\label{badones} For any positive real number $k$, there exists a pair of real numbers $(\alpha, \beta)$ that does not have a  $k$-free 
 energy limit, for any value of $s$.
\end{theorem}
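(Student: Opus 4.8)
The approach I would take is a Liouville-type construction: build $(\alpha,\beta)$ as the limit of a Cauchy sequence of approximations, tuned along a sparse sequence of ``stages'' so that the partition function develops incompatible growth rates along two interleaved sequences of lengths $M_1<N_1<M_2<N_2<\cdots$. The guiding picture, read off from the triangle‑sequence description of Section~\ref{triangle sequences}, is that the length‑$N$ prefixes of the triangle sequence of $(\alpha,\beta)$ furnish the ``convergents,'' the word $I_N$ realizing the $N$‑th convergent occurs as one index in the sum $Z_N(\alpha,\beta,s)=\sum_{|I|=N}|x(I)+\alpha y(I)+\beta z(I)|^{-s}$, and the quantity $|x(I_N)+\alpha y(I_N)+\beta z(I_N)|$ is the approximation error, which the triangle algorithm drives to $0$ and whose rate of decrease is governed by the triangle digits --- a single gigantic digit produces, at the corresponding prefix, an approximation vastly better than the ``generic'' one of comparable complexity.

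Concretely, I would fix $k$ and construct the triangle sequence of $(\alpha,\beta)$ to consist of bounded digits (all equal to $1$, say) except at a sparse recursive sequence of positions $i_1<i_2<\cdots$, at which a digit $a_{i_\ell}$ is inserted that is astronomically large --- chosen, once the initial segment $a_1,\dots,a_{i_\ell-1}$ has been fixed, so large that the triangle convergent $I_{N_\ell}$ recorded just before $a_{i_\ell}$ begins to be processed satisfies $|x(I_{N_\ell})+\alpha y(I_{N_\ell})+\beta z(I_{N_\ell})|<e^{-N_\ell^{k+1}}$, where $N_\ell$ is the length of that prefix. (One also checks, using that the choices of the huge digits are finitary and the ``bad'' hyperplanes $x(I)+\alpha y(I)+\beta z(I)=0$ are lower dimensional, that the construction can be carried out with $1,\alpha,\beta$ linearly independent over $\Q$, so that $Z_N$ is finite and positive throughout.) Since that convergent is a single index in $Z_{N_\ell}$, we get $Z_{N_\ell}(\alpha,\beta,s)\ge e^{sN_\ell^{k+1}}$, hence $\log Z_{N_\ell}(\alpha,\beta,s)/(sN_\ell^{k})\ge N_\ell\to\infty$.

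For the complementary estimate I would take $M_\ell$ to be a length strictly inside the $\ell$‑th bounded stretch --- after the previous huge digit has been fully processed and before $a_{i_\ell}$ starts --- so that through step $M_\ell$ every triangle digit of $(\alpha,\beta)$ is bounded. The point to prove is then a uniform lower bound $|x(I)+\alpha y(I)+\beta z(I)|\ge e^{-cM_\ell}$ valid for \emph{every} word $I$ of length $M_\ell$: the entries of $A^I$ are at most $C^{M_\ell}$, and --- crucially --- the triangle algorithm captures all the good simultaneous approximations of $(\alpha,\beta)$, so a word of length $M_\ell$ can make the linear form small only by essentially reproducing the triangle convergent of that complexity, whose quality is capped because the digits through step $M_\ell$ are bounded. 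Granting this, each of the $2^{M_\ell}$ terms of $Z_{M_\ell}$ is at most $e^{csM_\ell}$, so $\log Z_{M_\ell}(\alpha,\beta,s)=O(M_\ell)$ and $\log Z_{M_\ell}(\alpha,\beta,s)/(sM_\ell^{k})=O(M_\ell^{\,1-k})$. For $k\ge 1$ this stays bounded (and tends to $0$ when $k>1$), so against the hot estimate the sequence $\log Z_N(\alpha,\beta,s)/(sN^{k})$ cannot converge, for any $s>0$. For $0<k<1$ one instead notes that the $N$‑th triangle convergent alone forces $\log Z_N(\alpha,\beta,s)\ge c_s N$ for every $N$ (the denominators of the pair we built grow at least geometrically), so $\log Z_N(\alpha,\beta,s)/(sN^{k})\ge (c_s/s)N^{1-k}\to\infty$ and again no finite limit exists. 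In every case $(\alpha,\beta)$ has no $k$‑free energy limit.

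The main obstacle is the cold‑length bound: the assertion that bounded triangle digits through step $M$ prevent \emph{any} word of length $M$ from making $|x(I)+\alpha y(I)+\beta z(I)|$ smaller than $e^{-cM}$. This is not a soft statement; it requires that the triangle map really does find, up to a controlled loss, every good simultaneous rational approximation of $(\alpha,\beta)$ --- precisely the geometric content of the algorithm, namely the way it subdivides the parameter triangle and sweeps out the relevant approximating hyperplanes --- together with the bookkeeping that the huge digits inserted at earlier stages do not create spurious good approximations at the lengths $M_\ell$. This is the part of the argument that genuinely uses the triangle map rather than a one‑dimensional continued fraction, and I expect it to be where the bulk of the work developed in and after Section~\ref{triangle sequences} is spent.
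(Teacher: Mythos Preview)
Your ``hot'' estimate is essentially the paper's entire argument. The paper picks the triangle digits recursively so that $a_{m+1}>e^{f(m+1)N_m^{\,k}}$ (with $N_m=\sum_{i\le m}(a_i+1)$ and $f\nearrow\infty$ arbitrary), proves the geometric lemma $|(1,\alpha,\beta)\cdot C_m|\le 1/|x_{m+1}|$ by comparing areas of nested Farey subtriangles, notes $x_{m+1}\ge a_{m+1}$, and concludes $Z_{N_m}\ge |x_{m+1}|^s\ge a_{m+1}^s$, hence $\log Z_{N_m}/(sN_m^{\,k})\ge f(m+1)\to\infty$. Divergence to $+\infty$ along the subsequence $(N_m)$ already means no finite limit exists, for every $s>0$, and that is all the theorem asserts. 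There is no interleaving and no cold estimate; your expectation that ``the bulk of the work developed in and after Section~\ref{triangle sequences}'' goes into a lower bound at the lengths $M_\ell$ is misplaced --- all of the triangle-map geometry is spent on the area lemma that feeds the hot bound.

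Your cold estimate is therefore unnecessary, and as stated it also rests on a shaky premise. Requiring $|x(I)+\alpha y(I)+\beta z(I)|\ge e^{-cM}$ for \emph{every} length-$M$ word amounts to asserting that the triangle algorithm detects, up to an exponential loss, all good simultaneous linear-form approximations of $(1,\alpha,\beta)$ --- a best-approximation property that multidimensional continued fractions are not known to have. In addition, the dot products $d_m=(1,\alpha,\beta)\cdot C_m$ are monotone decreasing, so the extremely small value created by the huge digit at stage $\ell-1$ persists through the subsequent bounded stretch, which makes a uniform lower bound at $M_\ell$ delicate without further bookkeeping on the relative sizes of $N_{\ell-1}$ and $M_\ell$. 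None of this needs to be faced: drop the cold half and your hot half is the paper's proof.
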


 \begin{theorem}\label{goodones}  Let $(\alpha, \beta)$ be a pair of real numbers  such that there is a positive constant $C$ and constant $d\geq 2$ with 
 $$\frac{1}{Cb^d}\leq |p+ \alpha q + \beta r|$$
 for all relatively prime integers $p$, $q$ and $r,$ with $b$ the maximum of $|p|, |q|$ and $|r|.$.  Then $(\alpha, \beta)$
 has a k-free 
 energy limit for  $s > 2$, for any $k>1$, and in fact, the k-free energy limit is zero.
\end{theorem}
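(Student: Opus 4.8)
\emph{Plan of proof.} The point is that for $k>1$ only very crude bounds on $Z_N(\alpha,\beta,s)$ are needed: the plan is to sandwich $\log Z_N$ between two functions that are linear in $N$, after which dividing by $sN^k$ and letting $N\to\infty$ forces the $k$-free energy limit to be $0$. The two inputs are the Diophantine hypothesis, which prevents any single term of $Z_N$ from being too large, and the elementary observation that the entries of the products $A^I$ grow at most exponentially in $N$.

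First I would record two structural facts. Since $\det A_0=\det A_1=1$ and both matrices are integral, $A^I\in SL_3(\Z)$, so its third column $(x(I),y(I),z(I))$ is a nonzero integer vector; moreover the hypothesis forces $1,\alpha,\beta$ to be linearly independent over $\Q$ (otherwise the inequality would fail for a suitable primitive triple), so every denominator $x(I)+\alpha y(I)+\beta z(I)$ is nonzero and $Z_N$ is a genuine finite sum of positive terms. Second, since the entries of $A_0$ and $A_1$ lie in $\{-1,0,1\}$, an immediate induction on the length of $I$ yields a constant $c>1$ (for instance $c=2$) with $b(I):=\max(|x(I)|,|y(I)|,|z(I)|)\le c^N$ for all $I\in\mathcal S_N$.

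Next I would establish the two bounds. For the upper bound, note first that the Diophantine inequality, stated for primitive triples, extends to all nonzero integer triples: writing $g=\gcd(x,y,z)\ge 1$ and $b=\max(|x|,|y|,|z|)$, one has $|x+\alpha y+\beta z|=g\,|x/g+\alpha y/g+\beta z/g|\ge g\cdot\tfrac{1}{C(b/g)^d}=\tfrac{g^{d+1}}{Cb^d}\ge\tfrac{1}{Cb^d}$. Applying this with $b=b(I)\le c^N$ bounds each term of $Z_N$ by $C^s b(I)^{sd}\le C^s c^{sdN}$, and since there are $2^N$ terms we get $Z_N\le 2^N C^s c^{sdN}$, i.e. $\log Z_N\le s\log C+(\log 2+sd\log c)N$. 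For the lower bound, $|x(I)+\alpha y(I)+\beta z(I)|\le b(I)(1+|\alpha|+|\beta|)\le c^N(1+|\alpha|+|\beta|)$, so retaining a single term of the sum gives $Z_N\ge\big(c^N(1+|\alpha|+|\beta|)\big)^{-s}$ and hence $\log Z_N\ge -s\log(1+|\alpha|+|\beta|)-(s\log c)N$. Together these produce constants $C_1,C_2$, depending only on $\alpha,\beta,s,C,d$, with $|\log Z_N|\le C_1N+C_2$ for every $N$.

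Finally, for any $k>1$ this yields
$$\left|\frac{\log Z_N(\alpha,\beta,s)}{sN^k}\right|\le\frac{C_1N+C_2}{sN^k}\longrightarrow 0\quad\text{as }N\to\infty,$$
so the $k$-free energy limit exists and equals $0$, and one may take $s_c=2$ (in fact the same argument works for every $s>0$). There is no serious obstacle here; the only points requiring care are the passage from primitive to arbitrary integer triples in the Diophantine bound and the exponential growth estimate $b(I)\le c^N$, both handled above. The hypotheses $s>2$ and $d\ge 2$ are what one would need for the genuinely delicate $k=1$ free energy and are more than enough for the present statement.
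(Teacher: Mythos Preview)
Your proof is correct and follows essentially the same approach as the paper: bound each term of $Z_N$ via the Diophantine hypothesis together with an exponential bound on the entries of $A^I$ (the paper uses the Fibonacci bound $\phi^N$, you use $2^N$), count the $2^N$ terms, and observe that the resulting linear-in-$N$ bound on $\log Z_N$ is killed by $N^k$ for $k>1$. Your version is in fact a bit more careful than the paper's in two places: you explicitly extend the Diophantine inequality from primitive to arbitrary nonzero integer triples before applying it to $(x(I),y(I),z(I))$, and you supply a matching lower bound on $\log Z_N$ so as to conclude that the limit is exactly $0$ rather than merely $\le 0$.
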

From Theorem 6.4 in \cite{Burger-Tubbs1},  the above yields
\begin{corr} All pairs $(\alpha, \beta)$ of algebraic numbers, each of which has degree at least three and for which $1, \alpha, \beta$ are linearly dependent over $\Q$,   have  k-free 
 energy limits equal to zero, for any $k>1$. 
 \end{corr}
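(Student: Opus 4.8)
The plan is to show that any such pair $(\alpha,\beta)$ satisfies the Diophantine hypothesis of Theorem \ref{goodones} with $d=2$, and then to quote that theorem. All of the arithmetic content is concentrated in producing a lower bound of the form $|p+\alpha q+\beta r|\geq 1/(Cb^{2})$; this is precisely the estimate packaged as Theorem 6.4 of \cite{Burger-Tubbs1}, and the role of the hypotheses (degree at least three, and one $\Q$-linear relation among $1,\alpha,\beta$) is to collapse the three-variable linear form into a one-variable approximation problem to which Roth's theorem applies.

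First I would exploit the linear dependence. Write the relation as $a_{0}+a_{1}\alpha+a_{2}\beta=0$ with $a_{i}\in\Z$ not all zero; after relabelling we may assume $a_{2}\neq 0$, so $\beta=-(a_{0}+a_{1}\alpha)/a_{2}$. Since $\beta$ has degree at least three it is irrational, which forces $a_{1}\neq 0$ and $\Q(\alpha)=\Q(\beta)$, so $\alpha$ is itself a real algebraic number of degree at least three. Substituting the relation into the form and clearing the denominator $a_{2}$ gives $a_{2}(p+\alpha q+\beta r)=P+\alpha Q$, where $P=a_{2}p-a_{0}r$ and $Q=a_{2}q-a_{1}r$ are integers with $|P|,|Q|\leq C_{0}\,b$ for a constant $C_{0}$ depending only on the $a_{i}$. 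The task of bounding the three-variable form below is thereby reduced to bounding the single form $|P+\alpha Q|$.

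Next I would apply Roth's theorem to $\alpha$, which is the estimate supplied by Theorem 6.4 of \cite{Burger-Tubbs1} in this setting. When $Q\neq 0$ we have $|P+\alpha Q|=|Q|\,|\alpha+P/Q|\geq c\,|Q|^{-(1+\epsilon)}\geq c'\,b^{-(1+\epsilon)}$, and choosing any $\epsilon\leq 1$ this is at least $c'\,b^{-2}$; when $Q=0$ but $P\neq 0$ the form is a nonzero integer, hence at least $1$. Combining the cases and dividing by $|a_{2}|$ yields $|p+\alpha q+\beta r|\geq 1/(Cb^{2})$ for every coprime triple not lying on the rational line $(P,Q)=(0,0)$. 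With $d=2$ this is exactly the hypothesis of Theorem \ref{goodones}, whose conclusion then gives a $k$-free energy limit equal to zero for every $k>1$ and $s>2$, which is the assertion of the corollary.

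The main obstacle is the one family of triples excluded above: the integer multiples of the primitive vector $(a_{0},a_{1},a_{2})/\gcd(a_{0},a_{1},a_{2})$, on which the form vanishes identically and the bound of Theorem \ref{goodones} genuinely fails. Because $1,\alpha,\beta$ are $\Q$-linearly dependent this vanishing line really is present, so the hypothesis of Theorem \ref{goodones} cannot literally hold for every coprime triple, and one must check that this does no harm. The point to verify—and where the geometry of the triangle map developed in Section \ref{triangle sequences} is essential—is that the vectors $(x(I),y(I),z(I))$ occurring as the third columns of the matrices $A^{I}$ never point in this exceptional direction for a degree-at-least-three pair, so that each summand of $Z_N(\alpha,\beta,s)$ obeys the $b^{-2}$ bound and the estimate feeding Theorem \ref{goodones} applies term by term. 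Proving that the triangle-map convergents approach, but for no finite $I$ reach, the vanishing plane is the crux of the argument.
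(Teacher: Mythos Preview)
Your proposal takes the hypothesis ``linearly dependent'' at face value, but this is almost certainly a typo in the paper for ``linearly \emph{independent}.'' Two pieces of internal evidence: the immediately following ``In particular'' corollary concerns the pair $(\alpha,\alpha^{2})$, and $1,\alpha,\alpha^{2}$ are linearly \emph{independent} over $\Q$ whenever $\deg\alpha\ge 3$; and the cited Theorem~6.4 of Burger--Tubbs is a Schmidt/Roth-type lower bound whose hypothesis is precisely $\Q$-linear independence of $1,\alpha,\beta$. With the corrected hypothesis the paper's proof is a one-liner: Theorem~6.4 supplies the inequality $|p+\alpha q+\beta r|\ge 1/(Cb^{d})$ for all coprime $(p,q,r)$, and Theorem~\ref{goodones} then gives the conclusion directly. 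No reduction to the one-variable Roth theorem is needed.

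Under the literally stated hypothesis your argument cannot be completed, because the statement is in fact false. You correctly isolate the obstruction---the linear form vanishes identically on the rational line through $(a_{0},a_{1},a_{2})$---but your proposed escape, that the third columns $(x(I),y(I),z(I))$ of $A^{I}$ never point in this direction, does not hold. These vectors are fixed integer vectors, independent of $(\alpha,\beta)$; for instance $I=(1,0)$ gives $A_{1}A_{0}\,(0,0,1)^{T}=(1,-1,-1)^{T}$. Now take any real algebraic $\alpha$ of degree at least three and set $\beta=1-\alpha$: both coordinates have degree $\ge 3$, and $1-\alpha-\beta=0$ is the required dependence relation, so all the hypotheses as written are met. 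Yet $x(I)+\alpha y(I)+\beta z(I)=1-\alpha-\beta=0$ for this $I$, so $Z_{N}(\alpha,\beta,s)=\infty$ for every $N\ge 2$ and no finite $k$-free energy limit exists. Thus the ``crux'' you flag is not a gap to be filled but a genuine counterexample to the literal statement; the remedy is to read the hypothesis as independence, after which the paper's direct citation of Burger--Tubbs already does all the work.
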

 In particular, 
 \begin{corr} For any algebraic number $\alpha$ of degree at least three, the pair $(\alpha, \alpha^2)$ has k-free 
 energy limits equal to zero, for any $k>1$. 
 \end{corr}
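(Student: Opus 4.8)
The plan is to deduce this from Theorem~\ref{goodones} by verifying its Diophantine hypothesis directly for the pair $(\alpha,\alpha^2)$, rather than quoting the preceding corollary (whose stated hypothesis asks that $\alpha^2$ have degree at least three, which can fail --- e.g.\ for $\alpha=\sqrt[4]{2}$ one has $\alpha^2=\sqrt 2$ of degree two). So fix $\alpha$ algebraic of degree $n\ge 3$ and put $K=\Q(\alpha)$, so $[K:\Q]=n\ge 3$. The first step is the observation that $1,\alpha,\alpha^2$ are linearly independent over $\Q$: a nontrivial rational relation among them would make $\alpha$ a root of a polynomial of degree at most two. Consequently, for any relatively prime integers $p,q,r$ the number $\theta=p+q\alpha+r\alpha^2$ is a \emph{nonzero} element of $K$, and it remains to bound $|\theta|$ below in terms of $b=\max(|p|,|q|,|r|)$.

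For this I would run the classical Liouville-type estimate via the field norm. Choose a positive integer $D$ with $D\alpha$ an algebraic integer; then $D^2\theta=D^2p+Dq(D\alpha)+r(D\alpha)^2$ is a nonzero algebraic integer, whence $|N_{K/\Q}(D^2\theta)|\ge 1$ and therefore $|N_{K/\Q}(\theta)|\ge D^{-2n}$. For the upper bound, let $\alpha^{(1)},\dots,\alpha^{(n)}$ be the conjugates of $\alpha$ and set $\kappa=3\max_i\max(1,|\alpha^{(i)}|^2)$, a constant depending only on $\alpha$; then each conjugate of $\theta$ satisfies $|\theta^{(i)}|=|p+q\alpha^{(i)}+r(\alpha^{(i)})^2|\le \kappa b$. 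Combining these, $D^{-2n}\le |N_{K/\Q}(\theta)|=|\theta|\prod_{i\ge 2}|\theta^{(i)}|\le |\theta|(\kappa b)^{n-1}$, so $|\theta|\ge \frac{1}{Cb^{d}}$ with $d=n-1\ge 2$ and $C=D^{2n}\kappa^{n-1}$.

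This is precisely the hypothesis of Theorem~\ref{goodones} (and $d\ge 2$ as required), so that theorem applies directly and gives that $(\alpha,\alpha^2)$ has a $k$-free energy limit for every $s>2$ and every $k>1$, equal to zero --- which is the assertion. I do not expect a genuine obstacle here: the only points needing care are clearing denominators (harmless, affecting only the constant $C$) and ensuring the exponent $d=n-1$ stays $\ge 2$, which is exactly where the degree-at-least-three hypothesis on $\alpha$ is spent. In fact the same norm estimate, applied to any algebraic $\beta$ with $1,\alpha,\beta$ linearly independent over $\Q$ and $\Q(\alpha,\beta)$ of degree at least three, reproves both preceding corollaries with $d=[\Q(\alpha,\beta):\Q]-1$, without appealing to Theorem~6.4 of \cite{Burger-Tubbs1}.
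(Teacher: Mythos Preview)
Your argument is correct. The paper itself gives no separate proof: it simply writes ``In particular,'' deducing this corollary from the preceding one, which in turn quotes Theorem~6.4 of \cite{Burger-Tubbs1}. So the paper's route is: Burger--Tubbs $\Rightarrow$ first corollary $\Rightarrow$ this one as the special case $\beta=\alpha^2$.

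Your approach is genuinely different and, in fact, cleaner. You bypass the citation entirely and verify the Diophantine hypothesis of Theorem~\ref{goodones} by the elementary Liouville norm estimate in $\Q(\alpha)$. Two points are worth noting. First, you correctly observe that the ``In particular'' step in the paper is not quite immediate: the preceding corollary asks that \emph{each} of $\alpha,\beta$ have degree at least three, and $\alpha^2$ can drop to degree two (your example $\alpha=2^{1/4}$ is apt), so your direct argument actually patches a small gap in the paper's chain of implications. Second, your proof is self-contained and makes transparent exactly where the degree hypothesis is spent, namely in forcing the exponent $d=[\Q(\alpha):\Q]-1\ge 2$. The paper's route has the advantage of brevity (one sentence) at the cost of relying on an external reference and, as you noticed, a slightly imprecise specialization; your route costs a paragraph but is more robust and more informative.
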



\section{The triangle sequence}\label{triangle sequences}  
\subsection{Review of the triangle sequence}
This is a rapid fire overview of the triangle sequence   \cite{Garrity01}  \cite {Assaf05} \cite{Schweiger05} \cite{Messaoudi-Nogueira-Schweiger09} 
.  

Partition  the triangle $ \bigtriangleup = \{ (x,y): 1 \geq x \geq y >  0 \}$  
 into disjoint triangles
 $$\bigtriangleup_{k} = \{ (x,y) \in \bigtriangleup: 1 - x - ky 
\geq 0 > 1 - x - (k+1)y \},$$
\begin{center}
\setlength{\unitlength}{.1 cm}
\begin{picture}(70,70)
\put(5,5){\line(1,0){60}}
\put(65,5){\line(0,1){60}}
\put(5,5){\line(1,1){60}}
\put(0,0){(0,0)}
\put(60,0){(1,0)}
\put(60,68){(1,1)}

\put(65,5){\line(-1,1){30}}
\put(65,5){\line(-2,1){40}}
\put(65,5){\line(-3,1){45}}
\put(65,5){\line(-4,1){48}}
\put(65,5){\line(-5,1){50}}

\put(5,40){$\triangle_{0}$}
\put(12,40){\vector(1,0){35}}
\put(5,27){$\triangle_{1}$}
\put(12,27){\vector(1,0){22}}
\put(5,22){$\triangle_{2}$}
\put(12,22){\vector(1,0){15}}
\put(5,18){$\triangle_{3}$}
\put(12,18){\vector(1,0){12}}
\put(5,14){$\triangle_{4}$}
\put(12,14){\vector(1,0){11}}
\end{picture}
\end{center}

For a pair
of real numbers $(\alpha, \beta)\in \triangle_k$ define 
$$T(\alpha,\beta) =\left (\frac{\beta}{\alpha}, \frac{1 - \alpha - k 
\beta}{\alpha}\right).$$ The {\it triangle sequence}
  for a 
pair $(\alpha, \beta)$ is the infinite sequence of nonnegative integers 
$(a_0,a_1,a_2, \ldots )$, where  $T^k(\alpha, \beta)\in \bigtriangleup_{a_k}$.  The map $T$ is a one-to-one onto map from each subtriangle $\triangle_k$ to the original $\triangle.$ In \cite{Messaoudi-Nogueira-Schweiger09}, Messaoudi, Nogueira and Schweiger showed that this is an ergodic map.  
 The triangle sequence 
is said to terminate at step $k$ if $T^k(\alpha, \beta)$ lands on the 
interval $\{(t,0):0\leq t\leq 1\}$.

Now to start linking the triangle sequence with our partition function.  
Another way of thinking about triangle sequences is as a method for 
producing integer lattice vectors in space
that approximate the plane $x+\alpha y + \beta z =0.$   Set 
$$C_{-3} = \left( \begin{array}{c} 1\\0\\0\end{array}  \right)  , C_{-2} =\left( \begin{array}{c} 0\\1\\0\end{array}  \right) ,C_{-1} =
\left( \begin{array}{c} 0\\0\\1\end{array}  \right) .$$
If the triangle sequence for $(\alpha, \beta)$ is $(a_0,a_1,a_2, \ldots )$, set
$$C_k = C_{k-3}-C_{k-2} - a_k C_{k-1}.$$
The triangle sequence can  be defined in terms of the dot products 
$d_k=(1,\alpha, \beta)\cdot C_k.$
  Assuming we know the number $a_0,\ldots,a_k$, then $a_{k+1}$
 is the nonnegative integer such that
$$d_{k-2}-d_{k-1}-a_{k+1}d_k \geq 0 > d_{k-2}-d_{k-1}-(a_{k+1}+1)d_k .$$
Then
$$d_{k+1}= d_{k-2}-d_{k-1}-a_{k+1}d_k.$$
For each sequence $(a_0,a_1,a_2, \ldots, a_k ),$ there is a subtriangle $\triangle(a_0,\ldots , a_k)$ of $\triangle$ such that 
$T^{(k+1)}:\triangle(a_0,\ldots , a_k) \rightarrow \triangle$ is one-to-one and onto.   In \cite{Assaf05}, the vertices of this triangle were determined.  For this, we will need a little notation.
Recalling  the notation from \cite{Assaf05}, for vectors
$$T= \left( \begin{array}{c} a\\ b\\ c   \end{array}\right) \; \mbox{and} \; S=  \left( \begin{array}{c} d\\ e\\ f   \end{array}\right) $$
with $a, d, a+d\neq 0,$
define
$$\hat{T}=  \left( \begin{array}{c} \frac{b}{a} \\ \frac{c}{a} \end{array}\right)$$
and further, define
$$T\hat{+}S= \widehat{T+S} =  \left( \begin{array}{c} \frac{b+ e}{a+d}\\ \frac{c+ f}{a+d}   \end{array}\right) .$$
(Such a sum is  called a {\it Farey sum}.)  Further, continuing with this type of notation, we define
$$ \left( \begin{array}{c} a\\ b\\ c   \end{array}\right) \hat{+}  \left( \begin{array}{c} d\\ e\\ f   \end{array}\right) \hat{+}   \left( \begin{array}{c} g\\ h\\ f  \end{array}\right) =   \left( \begin{array}{c} \frac{b+ e+h}{a+d+g}\\ \frac{c+ f+i}{a+d+g}   \end{array}\right) .$$

As shown in  \cite{Assaf05}, we can now cleanly describe the vertices for the triangle $\bigtriangleup (a_0, \ldots, a_n)$.  Set
$$X_n=\left(\begin{array}{c} x_n \\ y_n \\ z_n \end{array}  \right)=C_n\times C_{n+1}.$$
Then
the vertices for the triangle $\bigtriangleup (a_0, \ldots, a_n)$ are
$\hat{X}_{n-1}$, $\hat{X}_{n}$ and $ X_n \hat{+}X_{n-2}$.
At a critical stage for our argument, we will be using that 
$$ \triangle (\hat{\alpha},\hat{X}_{k-1},\hat{X}_k) \subset \triangle (X_k  \hat{+}a_{k+1}X_{k-1}\hat{+} X_{k-2},\hat{X}_{k-1},\hat{X}_k), $$
which will mean that 
$ \mbox{area}\; \triangle (\hat{\alpha},\hat{X}_{k-1},\hat{X_k}) < \mbox{area}\; \triangle (X_k\hat{+}a_{k+1}X_{k-1}\hat{+} X_{k-2},\hat{X}_{k-1},\hat{X}_k) ,$ as seen in 

\resizebox{3 in}{3 in}{\includegraphics[]{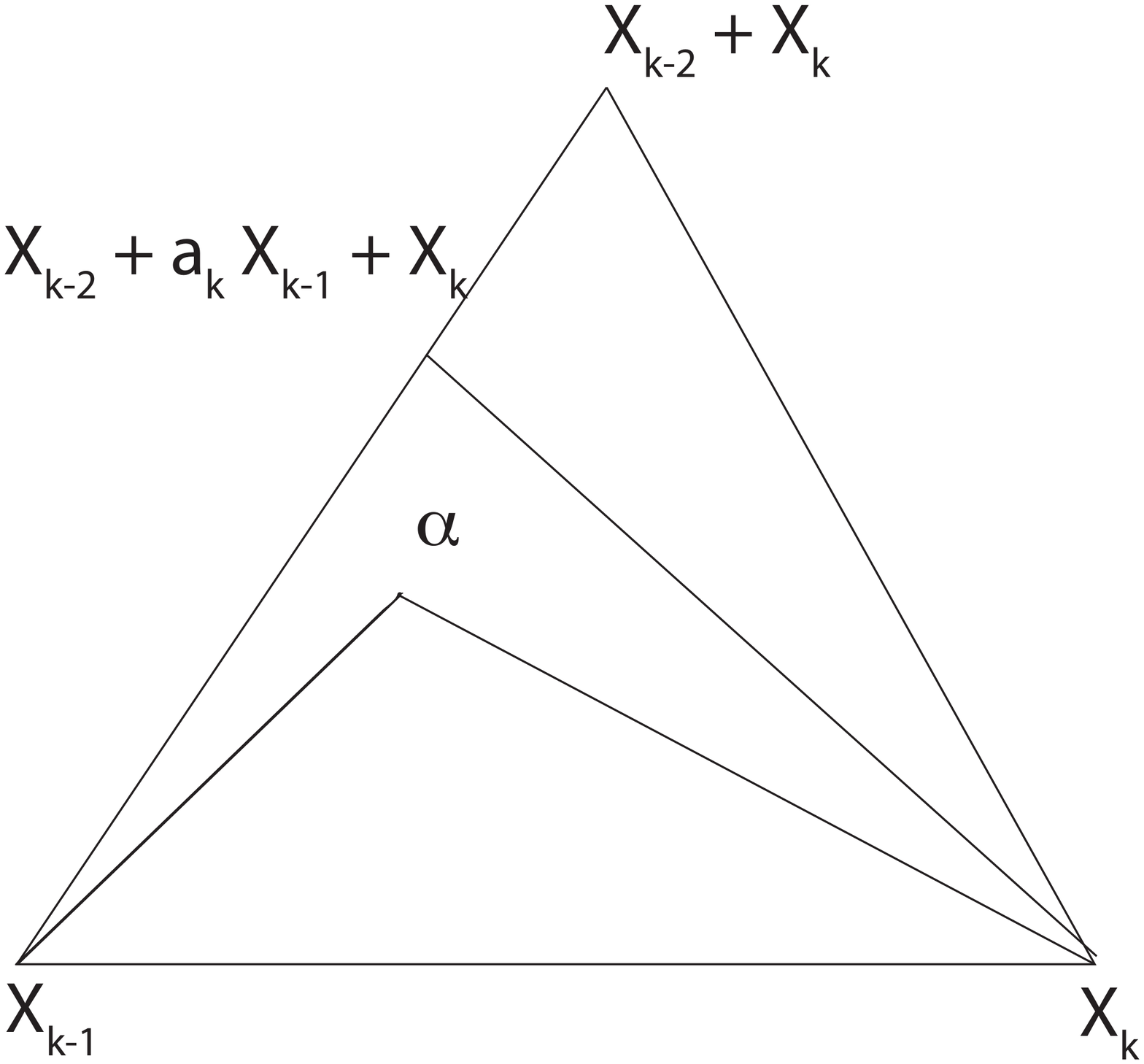}}

\subsection{The triangle sequence and the $A_0$ and $A_1$}
Here we provide the key link between the geometry of the triangle sequence and the terms for our partition function.
Let $(\alpha, \beta) \in \triangle$ have triangle sequence $(a_1,a_2, \ldots ).$  For each $k$, set 
$$N_k=(a_1+1) + \ldots (a_k+1).$$
Then, as seen in  \cite {Assaf05}, 
the matrix $A_1^{a_1}A_0 A^{a_2}A_0 \cdots A^{a_k}A_0$ will have columns $C_{N_k-2},C_{N_k-1},C_{N_k}.$
Thus for the sequence $I=(1^{a_1},0,1^{a_2},0, \ldots , 1^{a_k},0),$ we have
$$M*A^I = d_{N_k}.$$

\section{Proof of Theorem \ref{badones}}

Using the notation of the previous two sections, we start with
\begin{lemma} For all $k$,
$$|\left( \begin{array}{ccc} 1 & \alpha & \beta)\end{array} \right) \cdot C_k| \leq \frac{1}{|x_{k+1}|}.$$
\end{lemma}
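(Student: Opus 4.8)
The plan is to identify $d_k = (1,\alpha,\beta)\cdot C_k$ as (up to sign) the perpendicular distance, suitably normalized, from the point $\hat\alpha = (\alpha,\beta)$ to a line determined by the lattice vectors $C_{k-1}$ and $C_k$, and then to read off the bound $|d_k|\le 1/|x_{k+1}|$ from the geometry of the nested subtriangles $\triangle(a_0,\ldots,a_n)$ described in Section \ref{triangle sequences}. Concretely, recall $X_n = C_n\times C_{n+1}$ with first coordinate $x_n$, and that the vertices of $\triangle(a_0,\ldots,a_n)$ are $\hat X_{n-1}$, $\hat X_n$, and $X_n\hat + X_{n-2}$. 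Since $(\alpha,\beta)\in\triangle(a_0,\ldots,a_k)$ for every $k$ along the (non-terminating) triangle sequence, the point $\hat\alpha$ lies inside this triangle; the quantity $d_k$ is, up to the normalizing factor $x_k$ (or $x_{k+1}$), exactly the barycentric-type coordinate of $\hat\alpha$ measured against the edge through $\hat X_{k-1}$ and $\hat X_k$.

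First I would record the algebraic identities that make this precise: from $C_k = C_{k-3}-C_{k-2}-a_k C_{k-1}$ one gets the recursion $d_k = d_{k-3}-d_{k-2}-a_k d_{k-1}$, and from the cross-product definition $X_n = C_n\times C_{n+1}$ one has the triple-product identity $(1,\alpha,\beta)\cdot C_k$ expressible via the $2\times 2$ minors of the matrix with columns $C_{k-1},C_k$ — which, after dividing through by $x_{k-1}x_k$ or $x_{k+1}$, becomes the signed distance of $\hat\alpha$ from the relevant line. The key geometric input, already flagged in the excerpt, is the containment
$$\triangle(\hat\alpha,\hat X_{k-1},\hat X_k)\subset\triangle(X_k\hat + a_{k+1}X_{k-1}\hat + X_{k-2},\,\hat X_{k-1},\,\hat X_k),$$
which gives an inequality between the two triangle areas. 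Writing each area as base times height, with common base the segment $\hat X_{k-1}\hat X_k$, the height of the left triangle is proportional to $|d_k|$ and the right-hand triangle has area controlled in terms of the lattice data, yielding $|d_k|\le 1/|x_{k+1}|$ after the normalization constants are tracked.

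Second I would verify the nonvanishing and sign conditions ($x_{k-1}, x_k, x_{k+1}\ne 0$, and the Farey sums being defined) so that all the $\hat{\ }$ operations and area formulas are legitimate; these follow from the one-to-one onto property of $T^{(k+1)}$ on $\triangle(a_0,\ldots,a_k)$ and an induction on $k$ using the recursion for $C_k$, analogous to the positivity of continued-fraction denominators. I expect the main obstacle to be purely bookkeeping: correctly pinning down the normalization factor (is it $x_k$, $x_{k+1}$, or a product) relating $d_k$ to the Euclidean area of $\triangle(\hat\alpha,\hat X_{k-1},\hat X_k)$, and making sure the area comparison from the containment comes out with $|x_{k+1}|$ rather than some other minor in the denominator. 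Once the dictionary between $d_k$ and that area is set up cleanly, the containment above does all the real work and the stated bound drops out.
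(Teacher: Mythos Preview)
Your proposal is correct and follows essentially the same route as the paper: rewrite $(1,\alpha,\beta)\cdot C_k$ as the triple product $\det\bigl((1,\alpha,\beta),X_{k-1},X_k\bigr)$ via $C_k=\pm X_{k-1}\times X_k$, interpret this as $|x_{k-1}x_k|$ times the area of $\triangle(\hat\alpha,\hat X_{k-1},\hat X_k)$, and then use the containment to bound by the area of $\triangle(X_k\hat{+}a_{k+1}X_{k-1}\hat{+}X_{k-2},\hat X_{k-1},\hat X_k)$, which the paper computes directly (using $\det(X_k+X_{k-2},X_{k-1},X_k)=1$) to be $1/\bigl(|x_{k-1}x_k|\cdot|x_{k+1}|\bigr)$. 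The one place where your write-up is slightly off is the phrase ``minors of the matrix with columns $C_{k-1},C_k$''---those minors give $X_{k-1}$, not $C_k$; the identity you want is $C_k=\pm X_{k-1}\times X_k$, and once you have that the normalization you were worried about resolves itself immediately.
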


\begin{proof}
We know that 
$$\hat{\alpha} \in \triangle (X_k\hat{+}X_{k-2},\hat{X}_{k-1},\hat{X}_k).$$
The definition of $X_{k+1}$ and of $a_{k+1}$ is that 
$$  \left(\begin{array}{ccc}  \alpha & \beta\end{array} \right) \in \triangle (X_k\hat{+}a_{k+1}X_{k-1}\hat{+} X_{k-2},\hat{X}_{k-1},\hat{X}_k)$$
but that
$$ \left(\begin{array}{cc}  \alpha & \beta\end{array} \right)\not\in \triangle (X_k\hat{+}(a_{k+1}+1)X_{k-1}\hat{+} X_{k-2},\hat{X}_{k-1},\hat{X}_k).$$
Then we have 
\begin{eqnarray*}
| \left(\begin{array}{ccc} 1 & \alpha & \beta\end{array} \right) \cdot C_k)| &=& | \left(\begin{array}{ccc} 1& \alpha & \beta \end{array} \right) \cdot (X_{k-1}\times X_k) | \\
&=& | \det ( \left(\begin{array}{ccc} 1& \alpha & \beta\end{array} \right), X_{k-1},X_{k})| \\
&=& |x_{k-1}x_k \mbox{area} \triangle ( \left(\begin{array}{cc}  \alpha & \beta\end{array} \right),\hat{X}_{k-1},\hat{X}_k) |  \\
&\leq&  |x_{k-1}x_k \mbox{area} \triangle (X_k\hat{+}a_{k+1}X_{k-1}\hat{+} X_{k-2},\hat{X}_{k-1},\hat{X}_k) | \\
&=& \left|x_{k-1}x_k  \left(\frac{\det (X_k+X_{k-2},X_{k-1},X_k)  }{x_{k-1}x_k (x_k + a_{k+1}x_{k-1}+x_{k-2})} \right)  \right|  \\
&=& \frac{1}{|x_k + a_{k+1}x_{k-1}+x_{k-2}|}\\
 &=& \frac{1}{|x_{k+1}|}.
 \end{eqnarray*}
as desired.

Here we are using the following two facts.  First, if
$$A= \left( \begin{array}{ccc} a_{11}& a_{12} & a_{13} \\
a_{21}& a_{22} & a_{23} \\
a_{31}& a_{32} & a_{33}    \end{array}  \right),$$
the area of the triangle with vertices
$$\left( \begin{array}{c} \frac{ a_{21} }{ a_{11}} \\  \frac{ a_{31} }{ a_{11}}\end{array}
\right), \left( \begin{array}{c} \frac{ a_{22} }{ a_{12}} \\  \frac{ a_{32} }{ a_{12}}\end{array}\right), \left( \begin{array}{c} \frac{ a_{23} }{ a_{13}} \\  \frac{ a_{33} }{ a_{13}}\end{array}
\right)$$
is
$$\frac{\det(A)}{a_{11} a_{12} a_{13}}.$$
Second, we have 
$$\det (X_k+X_{k-2},X_{k-1},X_k) =1,$$
which is stems from the fact that the matrix $(X_k+X_{k-2},X_{k-1},X_k) $ is the product of matrices of determinant one.

\end{proof}

We are now ready to prove Theorem \ref{badones}, namely given any positive real number $k$, there exists a pair of real numbers $(\alpha, \beta)$ that does not have a  $k$-free 
 energy limit, for any value of $s$.

  Let $f(n)$ be a function that strictly increases to infinity.  Choose a sequence of integers $a_1, a_2, \ldots$ inductively by setting
$$a_{m+1} > e^{f(m+1)(a_1 + \ldots a_m + m)^k},$$
with the initial $a_1$ any positive integer greater than two.

Choose a pair $(\alpha, \beta)$ so that the pair's triangle sequence is $(a_1,a_2, \ldots).$
Recall that we set 
$N_m = a_1 + a_2 + \cdots + a_m + m.$

Consider  $Z_{N_m}(\alpha, \beta, s).$  We have the sequence $I=(1^{a_1},0,1^{a_2},0, \ldots , 1^{a_m},0)$ being part of what is summed over for the partition function $Z_{N_m}(\alpha, \beta, s).$  Thus,  for this $I$, we have
$$\frac{1}{|\left( \begin{array}{ccc} 1 & \alpha & \beta)\end{array} \right) \cdot C_{N_m}|^s }= \frac{1}{|M*A^I|^s} <  \sum_{I\in {\mathcal S}_N} \frac{1}{|M*A^I|^s}=Z_{N_m}(\alpha,\beta, s) .$$
Thus we have 
$$|x_{m+1}|^s < Z_{N_m}(\alpha,\beta, s).$$
We will show that 
$$\lim_{m\rightarrow \infty} \frac{\log |x_{m+1}|^s}{N_m^k} = \infty$$
for any value of $s>0.$

We have 
$$x_{m+1} = x_m+a_{m+1}x_{m-1}+ x_{m-2}.$$
Thus if $m$ is even we have 
$$x_{m+1} > a_{m+1}a_{m-1} \cdots a_1$$
and if $m$ is odd we have 
$$x_{m+1} > a_{m+1}a_{m-1} \cdots a_2.$$
In particular, we have 
$$x_{m+1} > a_{m+1}.$$
Then 
\begin{eqnarray*}
\lim_{m\rightarrow \infty} \frac{\log |x_{m+1}|^s}{N_m^k}  &>& \lim_{m\rightarrow \infty} \frac{\log |a_{m+1}|^s}{(a_1+ \cdots a_m + m)^k} \\
&\geq & \lim_{m\rightarrow \infty} \frac{s\log |e^{f(m+1)(a_1 + \ldots a_m + m)^k}|}{(a_1+ \cdots a_m + m)^k}\\
&=& \lim_{m\rightarrow \infty} \frac{sf(m+1)(a_1 + \ldots a_m + m)^k}{(a_1+ \cdots a_m + m)^k}\\
&=& \infty,
\end{eqnarray*}
finishing the proof of Theorem \ref{badones}.

\section{Proof of Theorem \ref{goodones}}

By assumption, $(\alpha, \beta)$ is  a pair of real numbers  such that there is a positive constant $C$ and constant $d\geq 2$ with 
 $$\frac{1}{Cb^d}\leq |p+ \alpha q + \beta r|$$
 for all relatively prime integers $p$, $q$ and $r,$ with $b$ the maximum of $|p|, |q|$ and $|r|.$. We must show that $(\alpha, \beta)$
 has a k-free 
 energy limit for  $s > 2$, for any $k>1$.  In fact, we will see that the k-free energy limit is zero.

 By inverting and raising both sides of the above inequality by $s$, we have
 $$\frac{1}{|p+ \alpha q + \beta r|^s}  \leq C^s b^{sd}.$$
 In this case
 \begin{eqnarray*}
 Z_N(\alpha,\beta, s) &=& \sum_{I\in {\mathcal S}_N} \frac{1}{\left| x(I) + \alpha y(I) + \beta z(I) \right|^s}\\
 &\leq& \sum_{I\in {\mathcal S}_N} C^s (\rm{max}(|x(I),y(I),z(I))^{sd}.
 \end{eqnarray*}
Hence we need to control the growth rates of the $|x(I)|,|y(I)|$ and $|z(I)| $ in terms of the integer $N$.
 We will find fairly crude but workable bounds.  First off change  the minus ones in the matrices $A_0$ and $A_1$ to ones.  Then the growth rates of the $|x(I)|,|y(I)|$ and $|z(I)| $  will be bounded by the $N^th$ Fibanacci number, which is well known to be 
 $$F_N = \frac{\phi^N -\psi^N}{\sqrt{5}},$$
where 
$$\phi = \frac{1+\sqrt{5}}{2}, \;\psi =  \frac{1-\sqrt{5}}{2}.$$ 
Thus 
$$Z_N(\alpha,\beta, s)  \leq 2^N C^s F_N^{sd} < 2^NC^s \left(  \frac{\phi^n }{\sqrt{5}} \right)^{sd}.$$
 
Hence
 \begin{eqnarray*}
 \lim_{N\rightarrow \infty}\frac{\log(Z_N(\alpha;s))}{\beta N^k}  &\leq& \lim_{N\rightarrow \infty}\frac{\log\left(2^N C^s\left(  \frac{\phi^N }{\sqrt{5}} \right)^{sd}\right)}{s N^k} \\
 &=& \lim_{N\rightarrow \infty} \frac{ N\log(2) + s\log(C) - sd\log(\sqrt{5}) + sdN\log(\phi)}{sN^k} \\
 &=& 0,
 \end{eqnarray*}
 for $K>1$, as desired.

\section{Conclusion}To some extent, this paper is the second (with \cite{Garrity10} the first) in a possible series of paper using the thermodynamic formalism to understand more about the structure of the real numbers.  There are many questions left.

Probably the most straightforward, but still non-trivial, ones are to try to mimic the results in this paper for other types of multidimensional continued fractions.  These analogs would involve choosing different matrices $A_0$ and $A_1$.

Our partition function  $Z_N(\alpha, \beta,s)$ depended on the choice of the three-by-three matrix $M$.  Different choices of $M$ give rise to different thermodynamics.  Certainly choosing different $M$'s in the two-by-two case led to the differences in the work of  Knauf \cite{Knauf1}\cite{Knauf2} \cite{Knauf3} \cite{Knauf4}\cite{Knauf5}     and of Fiala, Kleban and Ozluk  \cite{Fiala-Kleban1}\cite{Fiala-Kleban-Ozluk1} versus the work in \cite{Garrity10}.  Are there analogs here?

 We have shown that  some pairs of real numbers have 1-free energy limits while others do not.  This too is just a beginning.  What types of limits are possible?  Are there pairs $(\alpha, \beta)$ for which the sequence $\frac{\log\left(Z_N(\alpha,\beta,s)\right)}{N}$ has any possible limit behavior?  For example, can we  rig $(\alpha, \beta)$ so that any number can be the limit of the sequence.  In fact, we should be able to find such sequences with two accumulation points, three accumulation points, etc.  All of these should provide information about the pair of  real numbers $\alpha$ and $\beta$.

 In \cite{Garrity10}, continued fractions were critical.  Since an amazing amount is known about continued fractions, it is not surprising that there are more results in \cite{Garrity10}.  For example, in that paper it was shown that $e-1$ has a one-free energy limit.  Are there similar such results possible for concrete pairs of reals of the thermodynamic systems of this paper.  These types of questions are to some extent the most intriguing.
 
 Finally, there is the question of putting these results into the language of transfer operators. (See \cite{Ruelle1}, \cite{Mayer1}, \cite{Baladi1} for general references.)

\end{document}